\documentclass[12pt]{amsart}
\usepackage{amsmath,amsthm,amssymb,amscd,url,enumerate,mathtools}
\usepackage{graphicx}
\usepackage[margin=1in]{geometry}
\usepackage{afterpage}
\usepackage{tikz}
\usepackage[loadshadowlibrary]{todonotes}
\usepackage[all]{xy}
\usepackage[colorlinks=true,citecolor={blue},linkcolor = {purple},
pdfauthor={Hanson Smith}, pdftitle={Number Fields with Arbitrarily Large Minimal Index}, pdfsubject={Algebraic Number Theory},
            pdfkeywords={Radical extension, Pure extension, Monogenic, Index Form}]{hyperref} %I change the colors, the original is below. %Others are for metadata
\usepackage{tabularx}
\usepackage{subcaption} % for subfigures  
\usepackage{float}		% to get H positioning
\usepackage{booktabs}
\usepackage{subcaption}
\usepackage{caption}

\usepackage{enumitem} % For custom enumerate labels\
\usepackage{refcount} % for getting item numbers
\usepackage{etoolbox} % for patching
\usepackage{multirow}

\usepackage{multicol}
\usepackage{relsize} %for sizing math stuff. Use \mathsmaller and \mathlarger

\usepackage{dutchcal}

\usepackage[square]{natbib}
\setcitestyle{numbers}

\newcommand{\TITLE}{Number Fields with Arbitrarily Large Minimal Index}
\newcommand{\TITLERUNNING}{Number Fields with Arbitrarily Large Minimal Index}

\theoremstyle{plain}
\newtheorem{theorem}{Theorem}

\newtheorem{proposition}[theorem]{Proposition}
\newtheorem{lemma}[theorem]{Lemma}

\theoremstyle{definition}
\theoremstyle{remark}
\newtheorem{remark}[theorem]{Remark}

\newtheorem{example}[theorem]{Example}

\numberwithin{theorem}{section}

  {\end{list}}

  {\end{list}}
\newcommand{\tightoverset}[2]{%
  \mathop{#2}\limits^{\vbox to -.5ex{\kern-1.05ex\hbox{$#1$}\vss}}}

\numberwithin{equation}{section} %Number equations withing sections.

\def\Bcal{{\mathcal B}}

\def\Ocal{{\mathcal O}}

\newcommand{\FF}{\mathbb{F}}

\newcommand{\QQ}{\mathbb{Q}}

\newcommand{\ZZ}{\mathbb{Z}}

\newcommand{\Ind}{\operatorname{Ind}}

\title[\TITLERUNNING]{\TITLE}
\author[Hanson Smith]{Hanson Smith}
\address{Department of Mathematics, California State University San Marcos,
333 S. Twin Oaks Valley Rd.
San Marcos, CA 92096
USA}
\email{hsmith@csusm.edu}

\keywords{Monogenic, Index Form, Radical extension, Pure extension, Minimal Index} %Prime splitting, Prime ideal factorization} %, Monogenic} Root Extension
\subjclass[2020]{11R04}
\begin{document}

\sloppy %This helps with words running into the margins!

%%%%% To ease editing, for IMPAN journals add:

\baselineskip=17pt

%%%%%%%%%%%%%%%%%%%%%%%%%%%%%%%%%%%%%%%%%%%%%%%%%%%%%%%%%%%%%%%%%%%%%%%%%%%%%%%%%%%%%%%%%%%%%%%%%%%%%%%%%%%%%%%

\begin{abstract}
For a number field $K/\mathbb{Q}$, the minimal index is the least positive integer $m$ for which there exists a monogenic order with index $m$ in the maximal order. %$\mathbb{Z}[\alpha]$ with index $\big[\mathcal{O}_K:\mathbb{Z}[\alpha]\big]=m$, where $\mathcal{O}_K$ is the maximal order. 
For any $n>2$ and $N>1$, we construct infinitely many number fields of degree $n$ with minimal index greater than $N$. 
%Moreover, these number fields are exceptional in that they have no common index divisors.
\end{abstract}

%%%%%%%%%%%%%%%%%%%%%%%%%%%%%%%%%%%%%%%%%%%%%%%%%%%%%%%%%%%%%%%%%%%%%%%%%%%%%%%%%%%%%%%%%%%%%%%%%%%%%%%%%%%%%%

\maketitle

%%%%%%%%%%%%%%%%%%%%%%%%%%%%%%%%%%%%%%%%%%%%%%%%%%%%%%%%%%%%%%%%%%%%%%%%%%%%%%%%%%%%%%%%%%%%%%%%%%%%%%%%%%%%%%

\section{Introduction and Main Result}

%%%%%%%%%%%%%%%%%%%%%%%%%%%%%%%%%%%%%%%%%%%%%%%%%%%%%%%%%%%%%%%%%%%%%%%%%%%%%%%%%%%%%%%%%%%%%%%%%%%%%%%%%%%%%%

If $K$ is a number field with ring of integers $\Ocal_K$, then it is natural to be curious about the ways that one may describe $\Ocal_K$. When $\Ocal_K$ is generated by adjoining a single algebraic integer to $\ZZ$, then we say $K$ is \textit{monogenic}. As Dedekind \cite{Dedekind} showed with the polynomial $x^3-x^2-2x-8$, not every number field is monogenic. In this case, the integral prime 2 is a \textit{common index divisor}: 2 divides the index $\big[\Ocal_K:\ZZ[\alpha]\big]$ of any monogenic order in the maximal order.

Pleasants \cite{Pleasants} building on the work of Hensel \cite{Hensel1894} and Dedekind \cite{Dedekind} completely classifies common index divisors based on splitting information. Conversely, Engstrom \cite{Engstrom} verifies a conjecture of Ore by showing that the largest power of a common index divisor dividing the index of any monogenic order is not determined by splitting information. The recent book by Gouv\^ea and Webster \cite{CIDDBook} provides a nice historical account of the impact of common index divisors on the development of algebraic number theory in addition to translations of \cite{Dedekind} and \cite{Hensel1894}.

The \textit{field index} of a number field $K$ is defined to be 
\[\Ind_{\gcd}(K)\coloneqq \gcd_{\alpha\in \Ocal_K, \ K=\QQ(\alpha)}\Big(\big[\Ocal_K:\ZZ[\alpha]\big]\Big).\]
A common index divisor is a prime divisor of the field index. The number fields with field index 1 are exactly the number fields with no common index divisors, and by the results outlined above this is completely understood via the splitting of integral primes. However, there are non-monogenic number fields with field index equal to 1. Adjoining $\sqrt[3]{5\cdot 49}$ to $\QQ$ yields a well-known example. To better understand this phenomenon, we define the \textit{minimal index}:
\[\Ind_{\min}(K)\coloneqq \min_{\alpha\in \Ocal_K, \ K=\QQ(\alpha)}\Big(\big[\Ocal_K:\ZZ[\alpha]\big]\Big).\] 
For example, one can show that $\QQ\big(\sqrt[3]{5\cdot 49}\big)$ has field index 1 but minimal index 2. 

Since every quadratic extension of $\QQ$ is monogenic, the minimal index only becomes interesting for extensions of degree $n>2$. An early investigation by Hall \cite{Hall} constructs radical/pure\footnote{Fields generated by an $n^\text{th}$ root.} cubic fields with arbitrarily large minimal index. Much of the subsequent literature regarding minimal indices has focused on number fields of a fixed degree. For example, Spearman, Yang, and Yoo \cite{BYY} show that any cube-free positive integer can be realized as the minimal index of a radical cubic field. For number fields of a general degree, Thunder and Wolfskill \cite{ThunderWolfskill} give upper bounds for $\Ind_{\min}(K)$ in terms of the degree and discriminant of $K$, while Kim and Wolske \cite{KimWolske} find families of number fields containing a quadratic subfield where the minimal index is large relative to the discriminant.

The purpose of this note is to give a constructive proof of the following:

\begin{theorem}\label{Thm. Main}
    Let $n>2$ and $N>1$. Then there exist infinitely many number fields of degree $n$ with minimal index greater than $N$.
\end{theorem}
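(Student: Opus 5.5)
The plan is to force a common index divisor: choose a prime $p$ that splits completely in $K$. Then, although $\Ind_{\gcd}(K)$ might be small, every monogenic order must have index divisible by $p$ raised to a large power. Concretely, fix $n>2$ and $N>1$. Choose a prime $p$ with $p<n$; for instance $p=2$ works since $n\ge 3$. We will construct infinitely many degree $n$ fields in which $p$ splits into at least $p+1$ distinct primes of residue degree $1$. More flexibly, we can demand that several small primes $p_1,\dots,p_r$ split completely, with $p_i<n$.

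The key fact is the classical Dedekind--Hensel observation. If $\ZZ[\alpha]$ has index prime to $p$, then the factorization of $p\Ocal_K$ mirrors the factorization of the minimal polynomial $f$ of $\alpha$ modulo $p$. A complete splitting into $n$ distinct degree-one primes then requires $n$ distinct roots of $f$ in $\FF_p$, which is impossible when $n>p$. Hence $p$ divides $[\Ocal_K:\ZZ[\alpha]]$ for every $\alpha$.

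To get the index above $N$, rather than merely divisible by $p$, we quantify this. Write $p\Ocal_K=\mathfrak p_1\cdots\mathfrak p_n$. Since only $p$ residue classes are available, the images $\alpha \bmod \mathfrak p_i$ must coincide for many pairs $i\ne j$. Each coincidence forces $v_{\mathfrak p_i}(\alpha-\alpha_j')$ to be at least $1$ locally. Summing over pairs, $v_p(\operatorname{disc}(f))$ is at least the number of colliding pairs. That number is at least roughly $n^2/(2p)-n/2$, and it is positive for $n>p$. Since $p$ is unramified, $v_p(\operatorname{disc}K)=0$, so $v_p$ of the index is at least half the number of colliding pairs, hence at least $1$.

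So each split prime contributes a factor of at least $p$. Making primes $p_1<\dots<p_r<n$ all split completely gives index at least $\prod p_i$. This is bounded by the primes below $n$, so by itself it is not enough to exceed an arbitrary $N$. To fix this, we would instead make $p=2$ split completely in a field of degree $n$ and show directly that $v_2(\text{index})$ is forced to be large. The obstacle is that the lower bound from pair collisions is fixed for fixed $n$. The real way to get arbitrarily large index is therefore different.

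Following Hall's idea for cubics, I would use radical fields $K=\QQ(\sqrt[n]{a})$ together with an index-form argument. Take $a=q\,m^{n-1}$ with $q$ prime and $m$ a product of many primes chosen so that the fields are distinct. Then $\Ocal_K$ contains elements like $\theta^{k}/m^{\lfloor k(n-1)/n\rfloor}$ with $\theta=\sqrt[n]{a}$, which give an explicit integral basis. The index form in that basis factors as a norm-form-like product.

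One then shows the index form takes no values of absolute value $\le N$. Reducing modulo a suitable auxiliary prime $\ell$ should show that small values would force solutions of a congruence that has none. For example, choose $\ell$ splitting completely in $K$ with $\ell<n$, combined with the Thue-type finiteness of small index-form values. This last step, exhibiting a uniform lower bound on the index form for infinitely many $a$, is the main obstacle. I would first try to show the index form is divisible by a power of $q$ or of $m$'s primes, coming from the nonmonogenic local structure at primes dividing $m$ where the ramification is total but tame.
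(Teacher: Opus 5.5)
There is a genuine gap. The step that carries all the weight is a lower bound $|F|>N$ for every nonzero value of the index form $F$, on infinitely many fields. You leave exactly this step as ``the main obstacle,'' and none of the tools you suggest for it can work.

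\begin{itemize}
\item You rightly abandon common index divisors: for fixed $n$ their contribution to the minimal index is bounded in terms of $n$ alone. But your auxiliary prime ``$\ell$ splitting completely in $K$ with $\ell<n$'' is again a common index divisor, so it again contributes only a bounded power.
\item Thue/Gy\H{o}ry-type finiteness of the solutions of $F=\pm t$ says nothing about whether small values occur at all, let alone uniformly in $a$.
\item Your fallback, forcing $F$ to be divisible by powers of $q$ or of the primes dividing $m$, is impossible. These primes are totally ramified and Eisenstein ($x^n-qm^{n-1}$ at $q$, $x^n-q^{n-1}m$ at $r\mid m$), so none of them is a common index divisor. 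Concretely, in your basis $\{\theta^k/m^{\lfloor k(n-1)/n\rfloor}\}_{0\le k<n}$, $F$ reduces modulo each such prime $r$ to a monomial $c_r X_j^{n(n-1)/2}$ with $r\nmid c_r$ (for a suitable $j$), and this takes unit values. The paper notes that these fields have no common index divisors at all.
\end{itemize}
Your instinct to find a congruence obstruction modulo an auxiliary prime is sound, but you picked the wrong kind of prime.

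The missing idea is to use that monomial reduction at the totally ramified primes for a power-residue obstruction rather than for divisibility. Take $d=n$ for $n$ odd and $d=n/2$ for $n$ even. Then $d\mid \frac{n(n-1)}{2}$ but $d\nmid\frac{(n-1)(n-2)}{2}$, so modulo such a prime all nonzero values of $F$ lie in a single coset of the $d$-th powers. The paper takes $K=\mathbb{Q}(\sqrt[n]{\ell p q^{n-1}})$ with $p>N$ and $p\equiv 1 \bmod n$.
\begin{itemize}
\item Modulo $p$, $F\equiv q^{(n-1)(n-2)/2}X_1^{n(n-1)/2}$, which is a $d$-th power since $q\equiv 1\bmod p$.
\item Modulo $q$, $F\equiv \pm(\ell p)^{(n-1)(n-2)/2}X_{n-1}^{n(n-1)/2}$, which is a non-$d$-th power once $\ell$ is chosen suitably modulo $q$.
\item Choose $q$ by Chebotarev to split completely in $\mathbb{Q}\bigl(\zeta_{pn},\sqrt[d]{-1},\sqrt[d]{a}\bigr)$ for all $|a|<p$ that are $d$-th powers modulo $p$.
\end{itemize}
Now any value $0<|F|<p$ is prime to $p$, hence a $d$-th power modulo $p$, hence a $d$-th power modulo $q$. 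This contradicts the reduction modulo $q$, since $q>p$ gives $q\nmid F$. Therefore $\operatorname{Ind}_{\min}(K)\ge p>N$. Varying the prime $\ell$ in fixed classes modulo $q$ and $n^2$ gives infinitely many fields.

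Finally, your integral basis also needs a hypothesis at the primes $r\mid n$, namely the paper's condition $a^r\not\equiv a \bmod r^2$, which you do not mention.
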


Theorem \ref{Thm. Main} is proved by combining two ingredients. The first ingredient is the construction in Proposition \ref{Prop. Conditions} of a number field $\QQ\big(\sqrt[n]{\ell p q^{n-1}}\big)$ with large enough minimal index. The second is Lemma \ref{Lem. Infinite} which establishes the infinitude of integers $\ell$, $p$, and $q$ satisfying the requisite conditions for Proposition \ref{Prop. Conditions} and giving distinct number fields. 

%%%%%%%%%%%%%%%%%%%%%%%%%%%%%%%%%%%%%%%%%%%%%%%%%%%%%%%%%%%%%%%%%%%%%%%%%%%%%%%%%%%%%%%%%%%%%%%%%%%%%%%%%%%%%%%%%%%%%%%%%%%%%%

\section{A Construction of Number Fields with Arbitrarily Large Minimal Index}

 To start, we establish some notation. If $k>1$, write $\ZZ/k\ZZ^\times$ for the multiplicative group and $\left(\ZZ/k\ZZ^\times\right)^d$ for the subgroup of $d^\text{th}$ powers. For a fixed $n>2$, define $d=d(n)$ to be $n$ if $n$ is odd and $\frac{n}{2}$ if $n$ is even. This ensures that $d$ divides $\frac{n(n-1)}{2}$ but not $\frac{(n-1)(n-2)}{2}$. We begin with our construction.

\begin{proposition}\label{Prop. Conditions}
    Fix $n>2$ and $N>1$, and write $d=d(n)$ as above. %Define $d = \begin{cases}        n \text{ if } n \text{ is odd }\\        \frac{n}{2} \text{ if } n \text{ is even}.    \end{cases}$ 
    Let $p$ and $q$ be primes and let $\ell$ be a squarefree integer coprime to $p$ and $q$ such that the following conditions are satisfied:
    \begin{enumerate}[label=(\arabic*), ref=(\arabic*)]
        \item $p>N$ and $p\equiv 1\bmod n$. \label{(1)}
        \item $q\equiv 1\bmod pn$, $\overline{-1}\in \left(\ZZ/q\ZZ^\times\right)^d$, and if $-p<a<p$ with $\overline{a}\in\left(\ZZ/p\ZZ^\times\right)^d$, then $\overline{a}\in\left(\ZZ/q\ZZ^\times\right)^d$. \label{(2)}
        \item $(\ell p)^{\frac{(n-1)(n-2)}{2}}$ is not a $d^\text{th}$ power modulo $q$.\label{(3)}
        \item $\big(\ell pq^{n-1}\big)^r\not\equiv  \ell p q^{n-1} \bmod r^2$ for any prime $r$ dividing $n$.\label{(4)}
    \end{enumerate}
    Then $\QQ\big(\sqrt[n]{\ell p q^{n-1}}\big)$ has minimal index greater than $N$.
\end{proposition}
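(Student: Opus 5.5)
The plan is to work directly with the index form of $K=\QQ(\theta)$, where $\theta^n=a\coloneqq\ell pq^{n-1}$. We reduce it modulo $p$ and modulo $q$ and play the two reductions against each other using conditions \ref{(2)} and \ref{(3)}. The aim is to show that if $\alpha\in\Ocal_K$ generates $K$ and $m=[\Ocal_K:\ZZ[\alpha]]$ satisfies $1\le m\le N$, then $m$ is simultaneously forced to be and not to be a $d^\text{th}$ power modulo $q$.

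\emph{Step 1: integral basis.} Condition \ref{(4)} is the standard Dedekind-type criterion ensuring that the primes $r\mid n$ contribute nothing to the index of the pure order. Together with $\ell p$ squarefree and coprime to $q$, it gives the usual integral basis for a pure field:
\[
\omega_j=\theta^j/q^{\lfloor j(n-1)/n\rfloor},\qquad 0\le j\le n-1.
\]
Here $\omega_j$ is, up to sign, $(\ell p)^{\,j}\cdots$ times a power of $q$ over $q^{\ldots}$; concretely, $\omega_j$ is essentially a $j$-th power of $\phi=\theta^{n-1}/q^{n-2}$ up to rescaling. The element $\phi$ satisfies $\phi^n=(\ell p)^{n-1}q$. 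Both $p$ and $q$ are totally ramified.

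\emph{Step 2: the index form.} Write $\alpha=\sum x_j\omega_j$. Then $m=|I(x_1,\dots,x_{n-1})|$, where
\[
I=\prod_{i<k}\frac{\alpha^{(i)}-\alpha^{(k)}}{\sqrt{\cdot}}
\]
is the index form, homogeneous of degree $n(n-1)/2$.

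\emph{Step 3: reduction modulo $p$.} Since $p$ is totally ramified, all conjugates of $\alpha$ are congruent modulo the prime above $p$. The dominant term of $I$ modulo $p$ should come from the coordinate of $\omega_1$, namely $x_1$. I expect
\[
I\equiv c\,x_1^{n(n-1)/2}\pmod p,
\]
where $c$ is built from $n^{n}$-type constants, powers of $q$, and signs. I would pin $c$ down by computing the discriminant of $\theta$, or of the relevant basis element, explicitly. Using $p\equiv1\bmod n$ and $q\equiv1\bmod p$, the hope is that $c$ is a $d^\text{th}$ power times $\pm1$ modulo $p$. Since $d\mid n(n-1)/2$, this gives: $\pm m$ is a $d^\text{th}$ power modulo $p$. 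Because $0<m\le N<p$, the integer $a=\pm m$ lies in $(-p,p)$, so condition \ref{(2)} makes $a$ a $d^\text{th}$ power modulo $q$. Since $\overline{-1}\in(\ZZ/q\ZZ^\times)^d$, so is $m$.

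\emph{Step 4: reduction modulo $q$.} Now reduce $I$ modulo $q$. With respect to the basis of Step 1, the surviving term modulo $q$ should be governed by the coordinate attached to $\phi$, which generates the local ring at $q$ up to units. The constant then picks up $(\ell p)$ to the power $(n-1)(n-2)/2$ coming from the discriminant of $\phi$, relative to the $\theta$-normalization. I expect
\[
I\equiv c'\,(\ell p)^{(n-1)(n-2)/2}\,y^{n(n-1)/2}\pmod q,
\]
with $c'$ a $d^\text{th}$ power modulo $q$; here $q\equiv1\bmod pn$ should handle the powers of $n$ and $p$. Since $m\not\equiv0\bmod q$, the variable $y$ is a unit modulo $q$. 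Then $m$ being a $d^\text{th}$ power modulo $q$ forces $(\ell p)^{(n-1)(n-2)/2}$ to be one, contradicting condition \ref{(3)}. This contradiction shows $\Ind_{\min}(K)>N$.

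\emph{Main obstacle.} The delicate part is Steps 3 and 4: determining exactly which monomial of the index form survives modulo $p$ and modulo $q$, and verifying that the accompanying constants (powers of $n$, of $q$ modulo $p$, of $\ell p$, and signs) are $d^\text{th}$ powers. For this I would first compute $\prod_{i<k}(\zeta^i-\zeta^k)^2$-type expressions. The identity $\prod_{i<k}(\zeta^i-\zeta^k)^2=\pm n^n$ for $\zeta$ a primitive $n$-th root of unity should handle the roots-of-unity contribution. I would then track how the $q$-adic denominators $q^{\lfloor j(n-1)/n\rfloor}$ shift the exponent of $\ell p$ from $n(n-1)/2$ down to $(n-1)(n-2)/2$. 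This shift is precisely why $d$ was chosen to divide the former exponent but not the latter.
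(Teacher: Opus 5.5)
\textbf{There is a genuine gap: your Steps 3 and 4 are stated as expectations, not proved, and they are the whole substance of the argument.} Your overall strategy is the paper's. You use the same integral basis: your $\omega_j=\theta^j/q^{\lfloor j(n-1)/n\rfloor}=\theta^j/q^{j-1}$ is exactly $(\ell p)^{j/n}q^{(n-j)/n}$. You reduce the index form modulo $p$ and modulo $q$, and play condition (2) against condition (3).

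The reasons you give for why the unknown constants $c,c'$ would be harmless do not hold. The congruences $p\equiv 1\bmod n$ and $q\equiv 1\bmod pn$ do not make $n$ or $p$ a $d^\text{th}$ power modulo $q$. With $\ell=1$, condition (3) even forces $p$ \emph{not} to be one. So if $c'$ really contained stray powers of $p$ or $n$, or leftover roots-of-unity discriminant factors, they could cancel the non-$d^\text{th}$-power $(\ell p)^{(n-1)(n-2)/2}$, and the contradiction would disappear. What saves the argument is that no such factors occur, and that is precisely what must be proved.

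\textbf{The missing computation is short.} Take $\alpha=\sum_{j\ge1}X_j\omega_j$.
\begin{itemize}
\item A product $\omega_{j_1}\cdots\omega_{j_{i-1}}$ with $j_1+\cdots+j_{i-1}=kn+t$, $0<t<n$, equals $(\ell p)^kq^{i-2-k}\omega_t$. If $t=0$ it equals $(\ell p)^kq^{i-1-k}$.
\item The exponent sum is at least $i-1$. So every contribution to the coefficient of $\omega_t$ with $t<i-1$ has $k\ge 1$, hence a factor $p$.
\item Therefore the coordinate matrix of $1,\alpha,\dots,\alpha^{n-1}$ is upper triangular modulo $p$, with diagonal entries $q^{i-2}X_1^{i-1}$. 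This gives $F\equiv q^{(n-1)(n-2)/2}X_1^{n(n-1)/2}\equiv X_1^{n(n-1)/2}\bmod p$; the last step is where $q\equiv 1\bmod p$ enters.
\item Write $\omega_j=\phi^{n-j}/(\ell p)^{n-j-1}$ with $\phi=\omega_{n-1}$. The same argument gives $F\equiv \pm(\ell p)^{(n-1)(n-2)/2}X_{n-1}^{n(n-1)/2}\bmod q$. The sign comes from reversing the basis and is absorbed by $\overline{-1}\in(\ZZ/q\ZZ^\times)^d$.
\end{itemize}

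Your conjugate approach can also be finished. Since $\sigma_k\omega_j=\zeta^{kj}\omega_j$ and $\prod_j\omega_j=\theta^{n(n-1)/2}q^{-(n-1)(n-2)/2}$, one finds
$F=q^{(n-1)(n-2)/2}\prod_{i<k}\beta_{ik}\big/\prod_{i<k}(\zeta^k-\zeta^i)$, where $\beta_{ik}=\sum_{j}X_j(\zeta^{kj}-\zeta^{ij})(\theta/q)^{j-1}$. Modulo a prime above $p$, $\beta_{ik}\equiv X_1(\zeta^k-\zeta^i)$. So the roots-of-unity Vandermonde cancels exactly and no $n^n$ ever appears.

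Once either computation is actually carried out, the rest of your Steps 3--4 is correct: disposing of $p\mid m$ and $q\mid m$, handling the $\pm$, and the final contradiction.
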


%%%%%%%%%%%%%%%%%%%%%%%%%%%
%\htodo{\ref{(1)} ensures $p$ is big and has a proper subgroup of $d$th powers. $q\equiv 1\bmod p$ ensures $F$ will be a $d$th power modulo $p$ and that $q>p$. $q\equiv 1\bmod n$ ensures that $q$ has a proper subgroup of $d$th powers. Making the small residues of $d$th powers modulo $p$ and $-1$ $d$th powers modulo $q$ ensures that we will need a big index to satisfy the conditions. \ref{(3)} ensures that $F$ is not a $d$th power modulo $q$. \ref{(4)} allows us to have a nice integral basis.}

\begin{proof}
    First, we will compute the index form for $\QQ\big(\sqrt[n]{\ell p q^{n-1}}\big)$. Condition \ref{(4)} ensures that $x^n - \ell pq^{n-1}$ is maximal at each prime dividing $n$. (See \cite{g17} or \cite{JhorarKhanduja}.) Further, $x^n - \ell pq^{n-1}$ is Eisenstein and hence maximal at $p$ and the prime divisors of $\ell$. The only prime divisors of the discriminant of $x^n - \ell pq^{n-1}$ are the prime divisors of $n$ and $\ell$ and the primes $p$ and $q$. Thus $q$ is the only prime for which $x^n - \ell pq^{n-1}$ is not maximal. Noting $\QQ\big(\sqrt[n]{\ell p q^{n-1}}\big)=\QQ\big(\sqrt[n]{(\ell p)^{n-1}q}\big)$, we see $x^n-(\ell p)^{n-1} q$ is $q$-Eisenstein and hence maximal at $q$. Thus, 
\[\Bcal = \left\{1, \sqrt[n]{\ell p q^{n-1}}=(\ell p)^{\frac1n}q^{\frac{n-1}{n}}, (\ell p)^{\frac2n}q^{\frac{n-2}{n}}, \dots, (\ell p)^{\frac{n-1}{n}} q^{\frac{1}{n}}  \right\}\]
is an integral basis for $\QQ\big(\sqrt[n]{\ell p q^{n-1}}\big)$. 

If we have a monogenic order $\ZZ[\alpha]$ with monogenerator \[\alpha=X_1 (\ell p)^{\frac1n}q^{\frac{n-1}{n}} + X_2 (\ell p)^{\frac2n}q^{\frac{n-2}{n}}+\cdots + X_{n-1}(\ell p)^{\frac{n-1}{n}}q^{\frac{1}{n}},\] (we lose no generality assuming $X_0=0$), then the following change of basis matrix has determinant $\pm \big[\Ocal_K:\ZZ[\alpha]\big]$:

\begin{equation}\label{Eq: IndMatrix}
\mathcal{M}_{\operatorname{Ind}}\coloneqq\begin{bmatrix}  
    1     		 & 0            & 0       & \dots      & 0       & 0 \\
    0      	  	 & X_1          & X_2     & \dots      & X_{n-2} & X_{n-1} \\
    A_{3,1}	 	 & A_{3,2}	    & A_{3,3} & \dots     & A_{3,n-1}  & A_{3,n} \\
    A_{4,1}      & A_{4,2}      & A_{4,3} & \dots    & A_{4,n-1}  & A_{4,n} \\  
    \vdots       & \vdots       & \vdots  & \ddots    & \vdots  & \vdots \\
    A_{n-1,1}      & A_{n-1,2}      & A_{n-1,3} & \dots    & A_{n-1,n-1}  & A_{n-1,n} \\  
    A_{n,1}      & A_{n,2}      & A_{n,3} & \dots    & A_{n,n-1}  & A_{n,n} \\  
\end{bmatrix}.
\end{equation}

Here the entry $A_{i,j}$ is the coefficient of the $j^{\text{th}}$ element of $\Bcal$ when we compute 
\begin{equation}\label{Eq. Aij}
\left(X_1 (\ell p)^{\frac1n}q^{\frac{n-1}{n}} + X_2 (\ell p)^{\frac2n}q^{\frac{n-2}{n}}+\cdots + X_{n-1}(\ell p)^{\frac{n-1}{n}}q^{\frac{1}{n}}\right)^{i-1}.
\end{equation}

Writing $K=\QQ\big((\ell p)^\frac{1}{n}q^\frac{n-1}{n}\big)$, the index form of $K$ corresponding to the basis $\Bcal$ is the determinant of $\mathcal{M}_{\Ind}$. Indeed, $\big[\Ocal_K:\ZZ[\alpha]\big]=\big|\det\mathcal{M}_{\operatorname{Ind}}\big|$. %the matrix \eqref{Eq: IndMatrix}. 
%I.e., 
%\[\text{Index Form}\left(\QQ\left((\ell p)^\frac{1}{n}q^\frac{n-1}{n}\right)\right)=\Big|\det\mathcal{M}_{\operatorname{Ind}}\Big|.\]
We will avoid the absolute value and employ the notation 
\[F=F(X_1,\dots, X_{n-1})\coloneqq \det \mathcal{M}_{\operatorname{Ind}}.\] 
We aim to compute $F$ modulo $p$ and modulo $q$. We will only detail this computation for $p$, since after reordering the basis elements the same computation works for $q$ up to a sign, and $\overline{-1}\in \left(\ZZ/q\ZZ^\times\right)^d$. Expanding across the first row of \eqref{Eq: IndMatrix}, we see it suffices to compute the determinant of 
\[M\coloneqq\begin{bmatrix}  
X_1           & X_2     & X_3     & \dots      & X_{n-2} & X_{n-1} \\
A_{3,2}	      & A_{3,3} & A_{3,4} & \dots     & A_{3,n-1}  & A_{3,n} \\
A_{4,2}       & A_{4,3} & A_{4,4} & \dots    & A_{4,n-1}  & A_{4,n} \\  
\vdots        & \vdots  & \vdots    & \ddots &\vdots  & \vdots \\
A_{n-1,2}       & A_{n-1,3} & A_{n-1,4} &\dots    & A_{n-1,n-1}  & A_{n-1,n} \\  
A_{n,2}       & A_{n,3} & A_{n,4} & \dots    & A_{n,n-1}  & A_{n,n} \\  
\end{bmatrix}\]

Noting that products %of basis elements 
with $p$-adic valuation 1 or greater result in an entry that is divisible by $p$, we see that $M$ is upper triangular modulo $p$. In other words, if $i>j$, the entry $A_{i,j}$ will be divisible by $p$. Indeed, in this case, \eqref{Eq. Aij} shows each summand making up $A_{i,j}$ is coming from a product of basis elements resulting in a $p$-adic valuation of 1 or greater. Thus, the determinant of $M$ modulo $p$ is $X_1A_{3,3}\cdots A_{n,n}$. Again, considering \eqref{Eq. Aij} for $A_{i,i}$ and eliminating summands that are divisible by $p$ shows that we have
\[\det M\equiv\det\begin{bmatrix}    
 X_1           & X_2     & X_3 & \dots      & X_{n-2} & X_{n-1} \\
0	      & qX_1^2 &  A_{3,4} & \dots     & A_{3,n-1}  & A_{3,n} \\
0       & 0 &  q^2X_1^3 & \cdots & A_{4,n-1} & A_{4,n} \\  
\vdots        & \vdots & \vdots & \ddots    & \vdots  & \vdots \\
0       & 0 & 0 &\dots    & q^{n-3}X_1^{n-2}  & A_{n-1,n} \\  
0       & 0 & 0 & \dots    & 0  & q^{n-2}X_1^{n-1} \\  
\end{bmatrix}
\equiv q^{(n-1)(n-2)/2}X_1^{n(n-1)/2}\bmod p.\]
Similarly, 
\[F\equiv \pm(\ell p)^{(n-1)(n-2)/2}X_{n-1}^{n(n-1)/2}\bmod q.\]

Condition \ref{(2)} shows $q\equiv 1\bmod p$, so $F$ is a $d^\text{th}$ power modulo $p$. More precisely, any choice of $(z_1,\dots, z_{n-1})\in \ZZ^{n-1}$ such that $F(z_1,\dots, z_{n-1})\not\equiv 0\bmod p$ results in $F(z_1,\dots, z_{n-1})~\in~ \left(\ZZ/p\ZZ^\times\right)^d$. Condition~\ref{(2)} shows that $\pm X_{n-1}^{n(n-1)/2}$ will always be a $d^\text{th}$ power modulo $q$. However, Condition~\ref{(3)} shows that $F(z_1,\dots, z_{n-1})\notin \left(\ZZ/q\ZZ^\times\right)^d$. 

Suppose $(y_1,\dots y_{n-1})\in \ZZ^{n-1}$ are such that $|F(y_1,\dots y_{n-1})|\neq 0$ is minimal, and let $F_{\min}$ denote $F(y_1,\dots y_{n-1})$. I.e., $\Ind_{\min}(K)=|F_{\min}|$. Note that $F(y_1,\dots y_{n-1})=0$ implies $\ZZ[\alpha]$ is not an order in $\Ocal_K$. %the minimal index of $\QQ\big(sqrt[n]{\ell p q^{n-1}}\big)$ before taking. 
Condition \ref{(2)} implies $q>p$, so if $-p<F_{\min}<p$, then $F_{\min}\in\big\{ -p< a< p, \ \overline{a}\in \left(\ZZ/p\ZZ^\times\right)^d\big\}$. 
Thus Condition \ref{(2)} shows $F_{\min}\in \left(\ZZ/q\ZZ^\times\right)^d$. However, we have just seen that $F\notin \left(\ZZ/q\ZZ^\times\right)^d$. Thus $F_{\min}\geq p>N$ or $F_{\min}\leq -p<-N$, so $\Ind_{\min}(K)>N$. 
\end{proof}

All that remains is to show that the conditions in Proposition \ref{Prop. Conditions} are not too stringent.

\begin{lemma}\label{Lem. Infinite}
    There exist infinitely many triples $\ell$, $p$, $q$ satisfying the conditions of Proposition \ref{Prop. Conditions} and yielding distinct number fields.
\end{lemma}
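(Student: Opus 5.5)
We build the triples in stages, using Dirichlet's theorem on primes in arithmetic progressions and the Chebotarev density theorem.

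\textbf{Choice of $p$.} Dirichlet gives infinitely many primes $p>N$ with $p\equiv 1\bmod n$, so Condition \ref{(1)} holds. We fix one such $p$.

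\textbf{Choice of $q$.} Let $S=\{-1\}\cup\{a: -p<a<p,\ \overline{a}\in(\ZZ/p\ZZ^\times)^d\}$, a finite set of integers. We need primes $q\equiv 1\bmod pn$ such that every $a\in S$ is a $d^\text{th}$ power modulo $q$. Set $L=\QQ(\zeta_{pn},\zeta_d, a^{1/d}: a\in S)$, a Galois extension of $\QQ$. Any prime $q$ splitting completely in $L$ satisfies $q\equiv 1\bmod pn$. Moreover, for such $q$ the polynomial $x^d-a$ splits into linear factors modulo $q$, so each $a$ is a $d^\text{th}$ power mod $q$. By Chebotarev there are infinitely many such $q$, and every one of them satisfies Condition \ref{(2)}. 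The element $a=-1$ is handled inside $L$ in the same way (one could also use $-1=(-1)^d$ when $d$ is odd).

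\textbf{Choice of $\ell$.} We fix such a $q$ and choose $\ell$ so that Conditions \ref{(3)} and \ref{(4)} hold. Put $e=\frac{(n-1)(n-2)}{2}$. Since $d\mid\frac{n(n-1)}{2}$, $d\nmid e$, and $q\equiv 1\bmod d$, the map $x\mapsto x^e$ followed by projection to $(\ZZ/q\ZZ^\times)/(\ZZ/q\ZZ^\times)^d$ is nontrivial. Its image is the subgroup generated by the class of $g^e$, where $g$ is a generator; this class has order $d/\gcd(d,e)>1$. Hence there are residues $u$ mod $q$ with $(up)^e\notin(\ZZ/q\ZZ^\times)^d$: if $p^e$ is a $d^\text{th}$ power, take $u=g$; otherwise take $u=1$ or $u=g^{k}$ with $k$ chosen appropriately.

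For Condition \ref{(4)}, let $r$ be a prime dividing $n$. By Condition \ref{(2)} we have $r\mid q-1$, so $r\neq q$; and since $r\mid p-1$, also $r\neq p$. The condition then requires $(\ell pq^{n-1})^{r-1}\not\equiv 1\bmod r^2$ (with $r\nmid \ell$). This is a condition on $\ell$ modulo $r^2$, and it excludes only the $r-1$ residues satisfying $x^{r-1}\equiv1$ among the $r(r-1)$ units. We fix a good unit residue $v_r$ mod $r^2$ for each such $r$.

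By CRT and Dirichlet there are infinitely many primes $\ell$ (squarefree, and coprime to $p$ and $q$) with $\ell\equiv u\bmod q$ and $\ell\equiv v_r\bmod r^2$ for all $r\mid n$. The moduli $q$ and $r^2$ are pairwise coprime, so this is consistent.

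\textbf{Distinctness.} This is the main subtlety. The fields $\QQ(\sqrt[n]{\ell pq^{n-1}})$ for distinct primes $\ell$ are distinct: the prime $\ell$ ramifies totally in the field built from $\ell$, since $x^n-\ell pq^{n-1}$ is $\ell$-Eisenstein. On the other hand, $\ell$ does not divide the discriminant of the field built from any other prime $\ell'\neq\ell$, because that discriminant divides $n^n(\ell' pq)^{n-1}$. So $\ell$ is unramified there, and the two fields differ. This gives infinitely many distinct fields for a single fixed $(p,q)$, which proves the lemma.
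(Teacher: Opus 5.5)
Your proposal is correct and follows the paper's proof essentially step for step. You use Dirichlet for $p$, Chebotarev in the field generated by $\zeta_{pn}$ and the $d$th roots of $-1$ and of the small $d$th-power residues mod $p$ to get $q$, then CRT plus Dirichlet for a prime $\ell$, with distinctness coming from $\ell$ being totally ramified in its own field and unramified in the others. Your explicit check that admissible residues exist is a useful addition that the paper only asserts: for Condition \ref{(3)} you use $d \nmid \frac{(n-1)(n-2)}{2}$, and for Condition \ref{(4)} you note that only $r-1$ of the $r(r-1)$ unit classes mod $r^2$ are excluded.
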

    
\begin{proof}
    Dirichlet gives a density of choices of $p$ with $p>N$ and $p\equiv 1\bmod n$. Thus, we need only show that for any such $p$, there exist $\ell$ and $q$ satisfying Conditions \ref{(2)}, \ref{(3)}, and \ref{(4)}. If we have a prime $q$ satisfying \ref{(2)}, then Condition \ref{(3)} amounts to choosing $\ell$ modulo $q$ and Condition \ref{(4)} amounts to choosing $\ell$ modulo $n^2$. The Chinese (Sunzi's) remainder theorem and Dirichlet give us a density of choices of $\ell$ prime (and hence squarefree). For fixed $p$ and $q$, ranging over distinct choices of a prime $\ell$ yields number fields with distinct ramified primes, hence distinct number fields. Thus the proof of the lemma reduces to establishing Condition~\ref{(2)}. Here we will invoke Chebotarev, though we only require an infinitude of totally split primes in a Galois extension. 

    We consider the finite Galois extension 
    \[L=\QQ\left(\zeta_{pn},\sqrt[d]{-1},\sqrt[d]{a}: -p<a<p \text{ and } \overline{a}\in \left(\ZZ/p\ZZ^\times\right)^d\right).\] 
    Chebotarev's density theorem shows that the density of primes that split completely in $L$ is $\frac{1}{[L:\QQ]}$. If $q>p$ is one such prime, then $q$ splits completely in any subfield of $L$. Hence, $q$ splits completely in $\QQ(\zeta_{pn})$, so $q\equiv 1\bmod pn$. Likewise, $q$ splits completely in $\QQ(\sqrt[d]{a})$ for each $a$ with $-p<a<p$ and $\overline{a}\in \left(\ZZ/p\ZZ^\times\right)^d$ and for $a=-1$. %If $a$ is a $d^\text{th}$ power in $\ZZ$, then we have nothing to show, so we suppose this is not the case. Using Dedekind--Kummer factorization and the fact that $q$ does not divide the discriminant of $x^d-a$, 
    Complete splitting in $\QQ(\sqrt[d]{a})$ implies that $\sqrt[d]{a}$ reduces to a root of $x^d-a$ in $\FF_q$. Thus $\overline{a}\in\left(\ZZ/q\ZZ^\times\right)^d$. Any sufficiently large $q$ that splits completely in $L$ satisfies Condition~\ref{(2)}. Thus we have proved the lemma.
\end{proof}

\begin{remark}
    Proposition \ref{Prop. Conditions} was inspired by the work of Pleasants \cite{Pleasants} on exceptional\footnote{An \textit{exceptional} number field has no common index divisors but is not monogenic.} number fields and builds off the construction in Section 6 of \cite{ScofieldSmith}. In fact, the number fields constructed in Proposition \ref{Prop. Conditions} have no common index divisors. Indeed $x^n-\ell p q^{n-1}$ is maximal at all primes aside from $q$, so Dedekind--Kummer factorization shows the splitting of any integral prime $r\neq q$ mirrors the splitting of $x^n-\ell p q^{n-1}$ in $\FF_r[x]$. Thus $r$ is not a common index divisor. On the other hand, $x^n-(\ell p)^{n-1}q$ is maximal at $q$, so the same reasoning shows $q$ is not a common index divisor. 
    %can be generalized, and the conditions can be weakened in various manners. Most obviously, we can simply take $\ell$ to be square-free and coprime to $p$ and $q$. We have presented it in the manner above for simplicity and conciseness.
\end{remark}

We conclude with an explicit example.

\begin{example}
    Suppose we want to use Proposition \ref{Prop. Conditions} to construct a number field of degree $5$ with minimal index greater than 10. We fix $n=5$ and $N=10$. Conveniently, we can take $p=11$. The only $5^\text{th}$ powers modulo 11 between $-11$ and 11 are $-10, -1, 1,$ and 10. Thus $L=\QQ\big(\zeta_{55},\sqrt[5]{10}\big)$. One finds that $q=3191$ splits completely in $L$, and we check that $-10, -1, 1,$ and 10 are all $5^\text{th}$ powers modulo 3191. Conveniently, $11\cdot 3191^4\not\equiv \big(11\cdot 3191^4\big)^5\bmod 25$, so we can simply take $\ell=1$.

    Our desired number field is $\QQ\big(\sqrt[5]{11\cdot 3191^4}\big)$. The form $\det \mathcal{M}_{\Ind}$ takes 15 lines as an output in SageMath; however, modulo $11$ it is simply $X_1^{10}$. Modulo 3191, it is $556X_4^{10}$. We check that $\overline{556}\notin \left(\ZZ/3191\ZZ^\times\right)^5$. Therefore, the minimal index of $\QQ\big(\sqrt[5]{11\cdot 3191^4}\big)$ is greater than 10.
\end{example}

%%%%%%%%%%%%%%%%%%%%%%%%%%%%%%%%%%%%%%%%%%%%%%%%%%
%%%%%%%%%%%%%%%%%%%%%%%%%%%%%%%%%%%%%%%%%%%%%%%%%%%%%%%%%%%%%%%%%%%%%%%%%%%%%%%%%%%%%%%%%%%%%%%%%%%%%%%%%%%%
%%%%%%%%%%%%%%%%%%%%%%%%%%%%%%%%%%%%%%%%%%%%%%%%%%%%%%%%%%%%%%%%%%%%%%%%%%%%%%%%%%%%%%%%%%%%%%%%%%%%%%%%%%%%%

\bibliography{Bibliography}

\begin{thebibliography}{SYY16}

\bibitem[Ded78]{Dedekind}
Richard Dedekind.
\newblock \"{U}ber den {Z}usammenhang zwischen der {T}heorie der {I}deale und der {T}heorie der h\"{o}heren {K}ongruenzen.
\newblock {\em Abhandlungen der K\"oniglichen Gesellschaft der Wissenschaften zu G\"{o}ttingen}, 23(3):3--38, 1878.

\bibitem[Eng30]{Engstrom}
H.~T. Engstrom.
\newblock On the common index divisors of an algebraic field.
\newblock {\em Transactions of the American Mathematical Society}, 32(2):223--237, 1930.

\bibitem[Gas17]{g17}
T.~Alden Gassert.
\newblock A note on the monogeneity of power maps.
\newblock {\em Albanian J. Math.}, 11(1):3--12, 2017.

\bibitem[GW25]{CIDDBook}
Fernando~Q. Gouv\^{e}a and Jonathan Webster.
\newblock {\em Common inessential discriminant divisors---scenes from the early history of algebraic number theory}, volume~47 of {\em History of Mathematics}.
\newblock American Mathematical Society, Providence, RI, 2025.

\bibitem[Hal37]{Hall}
Marshall Hall.
\newblock Indices in cubic fields.
\newblock {\em Bull. Amer. Math. Soc.}, 43(2):104--108, 1937.

\bibitem[Hen94]{Hensel1894}
K.~Hensel.
\newblock Arithmetische {U}ntersuchungen \"{u}ber die gemeinsamen ausserwesentlichen {D}iscriminantentheiler einer {G}attung.
\newblock {\em Journal f\"{u}r die Reine und Angewandte Mathematik. [Crelle's Journal]}, 113:128--160, 1894.

\bibitem[JK17]{JhorarKhanduja}
Bablesh Jhorar and Sudesh~K. Khanduja.
\newblock On the index theorem of {O}re.
\newblock {\em Manuscripta Mathematica}, 153(1-2):299--313, 2017.

\bibitem[KW18]{KimWolske}
Henry~H. Kim and Zack Wolske.
\newblock Number fields with large minimal index containing quadratic subfields.
\newblock {\em Int. J. Number Theory}, 14(9):2333--2342, 2018.

\bibitem[Ple74]{Pleasants}
P.~A.~B. Pleasants.
\newblock The number of generators of the integers of a number field.
\newblock {\em Mathematika}, 21(2):160–167, 1974.

\bibitem[SS26]{ScofieldSmith}
Dylan Scofield and Hanson Smith.
\newblock Prime splitting and common $n$-index divisors in radical extensions: Part $p=2$, 2026.
\newblock https://arxiv.org/abs/2512.23677.

\bibitem[SYY16]{BYY}
Blair~K. Spearman, Qiduan Yang, and Jeewon Yoo.
\newblock Minimal indices of pure cubic fields.
\newblock {\em Arch. Math. (Basel)}, 106(1):35--40, 2016.

\bibitem[TW96]{ThunderWolfskill}
Jeffrey~Lin Thunder and John Wolfskill.
\newblock Algebraic integers of small discriminant.
\newblock {\em Acta Arith.}, 75(4):375--382, 1996.

\end{thebibliography}
\bibliographystyle{alpha}
%abbrv

%%%%%%%%%%%%%%%%%%%%%%%%%%%%%%%%%%%%%%%%%%%%%%%%%%%%%%%%%%%%%%%%%%%%%%%%%%%%%%%%%%%%%%%%%%%%%%%%%%%%%%%%%%%%%

\end{document}